\newcommand{\ntwo}{{\mathsf{\Lambda}}}
\DeclareMathOperator{\deficiency}{{\mathsf\Lambda}-def}
\newcommand{\cycleA}{\mathcal C_{\mathsf{good}}}
\newcommand{\cycleB}{\mathcal C_{\mathsf{bad}}}
\newcommand{\cycleC}{\mathcal C_{\mathsf{ugly}}}
\newcommand{\defstyle}{\emph}
\newtheorem{theorem}{Theorem}
\newtheorem{lemma}[theorem]{Lemma}
\newtheorem{conjecture}[theorem]{Conjecture}
\newtheorem{proposition}[theorem]{Proposition}
\title{A Hall-type condition for path covers in bipartite graphs}
\author{Mikhail Lavrov \qquad Jennifer Vandenbussche \\
\small Department of Mathematics\\[-0.8ex]
\small Kennesaw State University\\[-0.8ex]
\small Kennesaw, GA\\
\small\tt \{mlavrov,jvandenb\}@kennesaw.edu}
\begin{document}

\maketitle

\begin{abstract}
Let $G$ be a bipartite graph with bipartition $(X,Y)$. Inspired by a hypergraph problem, we seek an upper bound on the number of disjoint paths needed to cover all the vertices of~$X$.  We conjecture that a Hall-type sufficient condition holds based on the maximum value of $|S|-|\ntwo(S)|$, where $S\subseteq X$ and $\ntwo(S)$ is the set of all vertices in $Y$ with at least two neighbors in $S$. This condition is also a necessary one for a hereditary version of the problem, where we delete vertices from $X$ and try to cover the remaining vertices by disjoint paths.  The conjecture holds when~$G$ is a forest, has maximum degree $3$, or is regular with high girth, and we prove those results in this paper.
\end{abstract}

\section{Introduction}

\subsection{Path covers of bipartite graphs}

Problems regarding path covers of graphs are ubiquitous in graph theory.  A \defstyle{path cover} of $G$ is a collection of vertex-disjoint paths in $G$ where the union of the vertices of the paths is $V(G)$.  Certainly the most well-studied example looks for a single path covering all vertices of $G$, i.e.\ a Hamiltonian path. Graphs with such a path are also called \defstyle{traceable}.  See~\cite{gould2003advances} for a survey of results in this area.  Determining whether a graph has a Hamiltonian path is NP-complete even for very restrictive classes of graphs; for example, Akiyama et al.~\cite{akiyama1980np} prove that it is NP-complete for 3-regular bipartite graphs.

In graphs that are not traceable, we may seek a path cover with as few paths as possible. For example, Magnant and Martin~\cite{Magnant2009pathcover} conjecture that a $d$-regular graph $G$ can be covered with at most $|V(G)|/(d+1)$ paths, and prove this when $d \leq 5$. Feige and Fuchs~\cite{feige20226regular} extend the result to~$d=6$.  In~\cite{Magnant2016pathcovermindegree}, Magnant et al.\ conjecture that a graph with maximum degree~$\Delta$ and minimum degree~$\delta$ needs at most $\max\left\{\frac{1}{\delta+1},\frac{\Delta-\delta}{\Delta+\delta}\right\}\cdot |V(G)| $ paths to cover its vertices, which they verify for $\delta \in \{1,2\}$ and which Kouider and Zamime~\cite{kouider2022preprint} prove for $\Delta \ge 2\delta$. For dense $d$-regular bipartite graphs, Han~\cite{han2018} proves that a collection of $|V(G)|/(2d)$ vertex-disjoint paths covers all but~$o(|V(G)|)$ vertices.

In this paper, we focus on a variant of the path cover problem for bipartite graphs: collections of vertex-disjoint paths that cover one partite set of the bipartite graph. Let an \defstyle{$(X,Y)$-bigraph} be a bipartite graph with a specified ordered bipartition $(X,Y)$. If $G$ is an $(X,Y)$-bigraph, a \defstyle{path~$X$-cover} of $G$ is a set of pairwise vertex-disjoint paths in $G$ that cover all of $X$.  

We seek a Hall-type condition for the existence of a path $X$-cover of $G$ with at most $k$ paths.  Let $S \subseteq X$, and let $\ntwo_G(S)$ be the set of all vertices in $Y$ that have at least two neighbors in $S$; in cases where there is only one graph $G$ under consideration, we will write $\ntwo_G(S)$ simply as $\ntwo(S)$. We define the \defstyle{$\ntwo$-deficiency of $S$} to be $\deficiency(G,S) := |S|-|\ntwo(S)|$, and the \defstyle{$\ntwo$-deficiency of $G$} to be
\[
	\deficiency(G) := \max\{\deficiency(G,S) : S \subseteq X\}.
\]
We conjecture the following: 

\begin{conjecture}\label{conjecture:path cover}
Every $(X,Y)$-bigraph $G$ has a path $X$-cover by at most $\deficiency(G)$ paths.
\end{conjecture}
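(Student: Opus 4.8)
The plan is to first reformulate the problem in terms of \emph{links}. In an $(X,Y)$-bigraph a path alternates sides, so any path alternately passes through $X$ and $Y$; after discarding the (at most two) $Y$-endpoints, a path covering $X$-vertices $x_1,\dots,x_m$ in order uses exactly $m-1$ internal $Y$-vertices, where the $j$th such vertex is a common neighbor of $x_j$ and $x_{j+1}$. Thus a path $X$-cover is the same data as a linear forest $F\subseteq\binom{X}{2}$ together with an injection assigning to each edge $\{x,x'\}\in F$ a distinct vertex of $N(x)\cap N(x')\subseteq Y$; I call the chosen $Y$-vertices \emph{links}. If $L=|F|$ is the number of links, the cover uses exactly $|X|-L$ paths. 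So Conjecture~\ref{conjecture:path cover} is equivalent to the existence of a realizable linear forest with $L\ge |X|-\deficiency(G)$ links. The easy lower bound goes the other way and shows this is the right target: every link is a distinct common neighbor of two $X$-vertices, so any cover of $X$ uses at least $|X|-|\ntwo(X)|=\deficiency(G,X)$ paths, and deleting $X\setminus S$ and covering $S$ pushes this up to $\deficiency(G,S)$ for each $S$; taking the maximum gives $\deficiency(G)$ as the (hereditary) obstruction matching the conjectured upper bound.

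For the upper bound I would argue by induction on $|X|$, combined with a matching/factor relaxation. First clean up: any $y\in Y$ of degree at most $1$ can never serve as a link and can be deleted without changing $\ntwo(S)$ for any $S$, so I may assume $\ntwo(X)=Y$. Next, I would relax ``linear forest'' to the weaker condition that defines a degree-constrained subgraph: seek $H\subseteq G$ with $\deg_H(x)\le 2$ for every $x\in X$ and $\deg_H(y)\in\{0,2\}$ for every $y\in Y$, maximizing the number of $y$ with $\deg_H(y)=2$ (the used links). This is an $f$-factor/$b$-matching problem, so it admits a Tutte--Berge-type deficiency min--max, and I would aim to show via König/Ore-style counting—or via a deletion/contraction induction that peels off a well-chosen vertex of $X$—that its optimum number of used $Y$-vertices is at least $|X|-\deficiency(G)$. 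Were $H$ automatically acyclic, contracting its degree-$2$ $Y$-vertices would yield a linear forest with the required number of links and finish the proof.

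The obstacle, and the reason the relaxation is not by itself enough, is that such an $H$ is in general a disjoint union of paths \emph{and} cycles, whereas a path cover must avoid cycles. Breaking a cycle into a path deletes a link and hence creates an extra path, and a priori the number of cycles in a maximum degree-constrained subgraph is not controlled by $\deficiency(G)$. A concrete warning is $G=C_6$, where the relaxation happily selects a triangle of links on $X$ (three distinct common neighbors, each $X$-vertex of degree $2$), suggesting $0$ paths, while the true optimum is $1$ path $=\deficiency(G)$, obtained only after breaking the cycle. A complete argument therefore needs either to choose a maximum link set that \emph{minimizes} the number of cycles and then charge each unavoidable cycle against the deficiency, or to run an augmentation that enlarges the realized linear forest while maintaining acyclicity, carefully bounding the cycles introduced when rerouting through a shared $Y$-vertex.

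I expect this cycle-control step to be the main obstacle, and it is presumably why the conjecture remains open in general while the special cases are tractable: when $G$ is a forest there are no cycles to reroute at all; maximum degree $3$ limits each $Y$-vertex to at most three incident $X$-vertices, tightly constraining how links can close into cycles; and in the regular high-girth case any cycle formed by links must be long and locally sparse, so it can be broken cheaply and the breakages cannot accumulate. A uniform way to bound, per unit of $\deficiency(G)$, how many cycles a maximum link set is forced to contain would, I believe, close the gap to the full conjecture.
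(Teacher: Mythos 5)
The statement you are proving is labeled a \emph{conjecture} in the paper, and the paper does not prove it; it only establishes the special cases of forests, maximum degree $3$, and regular graphs of high girth (Proposition~\ref{result:forest} and Theorems~\ref{result:3-regular}--\ref{result:high girth}). Your proposal, by your own admission, also does not prove it, so it cannot be accepted as a proof of the general statement. Concretely there are two gaps, not one. The first is the step you treat as routine: you assert that the degree-constrained relaxation (choose $H\subseteq G$ with $\deg_H(x)\le 2$ for $x\in X$ and $\deg_H(y)\in\{0,2\}$ for $y\in Y$) attains at least $|X|-\deficiency(G)$ used $Y$-vertices ``via K\"onig/Ore-style counting'' or an unspecified induction. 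No such argument is given, and translating an $f$-factor min--max (which speaks about degree sums and odd components) into a bound involving $\ntwo(S)=|\{y: |N(y)\cap S|\ge 2\}|$ is not automatic; this would need to be carried out in full. The second gap is the one you flag yourself: a maximum such $H$ decomposes into paths \emph{and cycles}, and breaking each cycle costs one link, so you would need to bound the number of cycles against $\deficiency(G)$. Your $C_6$ example correctly shows the relaxation alone overestimates, and no mechanism for the charge is proposed.

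Your diagnosis of where the difficulty lies is, however, accurate and matches the structure of the paper's partial results. In the $3$-regular and high-girth cases the paper starts exactly from a cover of $X$ by vertex-disjoint cycles (Lemma~\ref{lemma:cycle cover}) and then spends all of its effort bounding the number of cycles by $\alpha_\ntwo(G)\le\deficiency(G)$ and stitching the cycles together into traceable components --- a greedy ``Do Something Else'' argument in Theorem~\ref{result:3-regular} and a Lov\'asz Local Lemma argument in Theorem~\ref{result:high girth}; the maximum-degree-$3$ case additionally handles the leftover forest via Proposition~\ref{result:forest}. So your ``cycle-control step'' is precisely the open content of the conjecture, and the proposal should be regarded as a plan of attack with two unproven steps rather than a proof.
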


If this conjecture holds, then for every $S \subseteq X$, there is a set of at most $\deficiency(G)$ vertex-disjoint paths whose intersection with $X$ is precisely $S$. To see this, just delete all the vertices in $X-S$ from $G$, which can only decrease the $\ntwo$-deficiency.

Conversely, suppose it is true that for every $S \subseteq X$, there is a set of at most $k$ vertex-disjoint paths whose intersection with $X$ is precisely $S$. Then for every $S$, these paths have at least $|S|-k$ internal vertices in $Y$ that are all elements of $\ntwo(S)$; therefore $|\ntwo(S)| \ge |S|-k$ for all $S$, which implies that $\deficiency(G) \ge k$. It follows that the condition in our conjecture is a \emph{necessary} one if we would like to draw the stronger conclusion in the preceding paragraph.

Our conjecture is a slightly weakened form of a conjecture on cycle covers proposed in~\cite{kostochka2021conditions}:

\begin{conjecture}\label{conjecture:super-cyclic}
Let $G$ be an $(X,Y)$-bigraph with the property that for all $S \subseteq X$ with $|S|>2$, $\deficiency(G,S) \le 0$. Then $G$ contains a cycle that covers all of $X$. 
\end{conjecture}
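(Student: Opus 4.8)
Since a cycle in the bipartite graph $G$ alternates between $X$ and $Y$, any cycle covering all of $X$ must use exactly $|X|$ vertices of $Y$, each of them adjacent to two vertices of $X$ on the cycle and hence lying in $\ntwo(X)$. The plan is therefore to reformulate the target as a Hamilton cycle in an auxiliary multigraph $M$ on vertex set $X$: for each $y \in \ntwo(X)$ introduce a parallel class of edges, one edge $xx'$ for every pair of neighbors $x,x'$ of $y$, and declare a cycle of $M$ \emph{admissible} if it uses at most one edge from each parallel class. A cycle through all of $X$ in $G$ is then precisely an admissible Hamilton cycle of $M$, where the one-edge-per-class rule enforces that distinct $Y$-vertices serve as the distinct connectors of the cycle.

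\emph{Stage 1: produce a cycle cover.} First I would show that the hypothesis forces a spanning disjoint union of cycles covering $X$, deferring the question of whether it is a single cycle. Concretely, I seek a subgraph $H \subseteq G$ with $\deg_H(x) = 2$ for every $x \in X$ and $\deg_H(y) \in \{0,2\}$ for every $y \in Y$; such an $H$ is exactly a vertex-disjoint union of cycles covering $X$, with each used $Y$-vertex a common neighbor of two $X$-vertices. I would model this as a degree-constrained subgraph (general-factor) problem and verify the appropriate defect condition, where the even-degree requirement on $Y$ is the parity constraint. The inequality $|\ntwo(S)| \ge |S|$ for $|S| \ge 3$ is exactly the expansion needed to route two admissible connectors to every vertex of $X$ while keeping each connector's multiplicity at most two; the small cases $|S| \le 2$ would be checked by hand.

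\emph{Stage 2: merge the cycles (the crux).} Take a cycle cover with the fewest cycles and suppose, for contradiction, that it has at least two components. I would attempt a local merging move: find a vertex $y \in \ntwo(X)$, or a short alternating structure, touching two different cycles, and splice them by rerouting through $y$, strictly decreasing the number of cycles. The main difficulty is proving that such a move is always available under the hypothesis, i.e.\ ruling out a \emph{tight, un-mergeable} configuration. The natural route is an extremal argument: from a minimal cycle cover in which no merge is possible, extract a set $S \subseteq X$ — a union of cycle vertex-sets, or a carefully pruned subset — for which $|\ntwo(S)| < |S|$ while $|S| \ge 3$, contradicting the assumption $\deficiency(G,S) \le 0$.

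I expect this second stage to be the essential obstacle, and it is precisely what makes the statement a conjecture rather than a theorem. The delicate point is the exclusion of $|S| \le 2$ from the hypothesis: since no expansion is guaranteed for singletons or pairs, small tight pieces (two vertices sharing a single common neighbor, or a short cycle that resists attachment) are exactly where connectivity can fail, and the merging argument must show that these never propagate into global disconnection. Turning the purely local Hall-type condition into the global existence of one spanning cycle will likely require a detailed analysis of minimal counterexamples, or a stronger monovariant than the raw number of cycles — for instance, tracking reachability between cycles through admissible alternating paths — so that a nontrivial deficient set can always be read off when merging stalls.
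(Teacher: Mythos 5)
The statement you are trying to prove is Conjecture~\ref{conjecture:super-cyclic}, which the paper does not prove: it is an open conjecture, quoted from Kostochka, Lavrov, Luo, and Zirlin, and the paper only uses it in one direction (showing that it would \emph{imply} Conjecture~\ref{conjecture:path cover} via the construction that adds $\deficiency(G)$ universal vertices to $Y$). So there is no proof in the paper to compare against, and your proposal, as you yourself acknowledge, is a plan rather than a proof.

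Concretely, both stages of your plan have genuine gaps. In Stage~1, the existence of a subgraph $H$ with $\deg_H(x)=2$ for all $x\in X$ and $\deg_H(y)\in\{0,2\}$ for all $y\in Y$ is a general-factor problem with a parity (even-degree) constraint on $Y$; the deficiency condition for such problems (Lov\'asz's general factor theorem) is not simply the Hall-type inequality $|\ntwo(S)|\ge|S|$, and you have not verified that the hypothesis implies it. In particular the hypothesis says nothing about sets of size $1$ or $2$, so local obstructions (an $x$ of degree $1$, or two vertices of $X$ whose only route to the rest of the graph is a single shared $y$) must be excluded by a separate argument that you have not supplied. In Stage~2, the merging step is exactly the open problem: you would need to show that whenever a minimum cycle cover has at least two components, some set $S$ with $|S|\ge 3$ and $|\ntwo(S)|<|S|$ can be extracted, and no such extraction argument is known. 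You correctly identify this as the crux, but identifying the obstacle is not the same as overcoming it; as written, the proposal does not establish the statement.
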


We claim Conjecture~\ref{conjecture:super-cyclic} implies Conjecture~\ref{conjecture:path cover}. Let $H$ be the graph obtained from $G$ by adding ${\deficiency(G)}$ more vertices to $Y$, each of which is adjacent to every vertex in $X$. Then for all $S \subseteq X$ with $|S|>2$ (and even with $|S|=2$), we have $\deficiency(H,S) \le 0$, since all the new vertices of $H$ are in $\ntwo_H(S)$. Now a cycle in $H$ covering all of $X$ yields a path $X$-cover of $G$ by at most $\deficiency(G)$ paths by deleting all the new vertices.

\subsection{Hypergraphs and the Gallai--Milgram theorem}

The setting of Conjecture~\ref{conjecture:path cover} can be translated into the language of hypergraphs and Berge paths in hypergraphs, and here we see the motivation for focusing on path cover of $X$.

Following the terminology of Berge~\cite{berge73}, a \defstyle{hypergraph} $H$ consists of a set of vertices $V(H)$ and a set of edges $E(H)$ where each edge $e \in E(H)$ is a subset of $V(H)$. (We allow edges of any size.) The \defstyle{subhypergraph of $H$ generated by a set $S \subseteq V(H)$} is the hypergraph with $V(H)=S$ and 
\[
	E(H) = \{e \cap S : e \in E(H), e \cap S \ne \varnothing\}.
\]
There are several notions of paths in hypergraphs that generalize paths in graphs. One such notion is that of a \defstyle{Berge path}: a sequence 
\[
	(v_0, e_1, v_1, e_2, v_2, \dots, e_\ell, v_\ell)
\]
where $v_0, v_1, \dots, v_\ell$ are distinct vertices in $V(H)$, $e_1, e_2, \dots, e_\ell$ are distinct edges in $E(H)$, and $\{v_{i-1}, v_i\} \subseteq e_i$ for all $i=1, \dots, \ell$.

Given a hypergraph $H$, we can define its \defstyle{incidence graph} to be the $(X,Y)$-bigraph $G$ with $X = V(H)$ and $Y = E(H)$ such that $xy \in E(G)$ if and only if $x \in X$, $y \in Y$, and $x \in y$. Berge paths in $H$ correspond to paths in $G$ that begin and end in $X$; these are vertex-disjoint in $G$ if and only if they are both vertex-disjoint and edge-disjoint in $H$.

If we define a \defstyle{Berge path cover} of the hypergraph $H$ to be a set of pairwise vertex- and edge-disjoint paths that cover all of $V(H)$, then Conjecture~\ref{conjecture:path cover} proposes a sufficient condition for $H$ to have a Berge path cover of size at most $k$. Moreover, the proposed sufficient condition is a necessary condition for every subhypergraph of $H$ to have a Berge path cover of size at most $k$.

This statement is reminiscent of the Gallai--Milgram theorem (\cite{gallai1960}, p.~298 in \cite{berge73}), which states that the vertices of any directed graph $D$ can be covered by at most $\alpha(D)$ disjoint paths, where $\alpha(D)$ is the independence number of $D$. (The weaker statement for undirected graphs clearly follows.)
For a hypergraph $H$, let a set $I \subseteq V(H)$ be \defstyle{strongly independent} (following the terminology of Berge) if $|e \cap I| \le 1$ for all $e \in E(H)$; let $\alpha(H)$, the \defstyle{strong independence number of $H$}, be the size of a largest strongly independent set in $H$. It would be natural to hope that $H$ has a path cover by at most~$\alpha(H)$ pairwise-disjoint paths. In \cite{muller1981oriented}, M\"uller proves such a generalization of the Gallai--Milgram theorem (and, in fact, a generalization of it to directed hypergraphs), but in a slightly different setting: M\"uller does not require the edges of a path to be distinct, and does not require the paths in the cover to be edge-disjoint, merely vertex-disjoint.

In our setting, the corresponding generalization is false. Translating from hypergraphs back into the language of graphs: a set $I \subseteq V(H)$ is strongly independent if and only if, in the incidence graph of $H$, $\ntwo(I) = \varnothing$. Generalizing to an arbitrary $(X,Y)$-bigraph $G$, let $S \subseteq X$ be \defstyle{$\ntwo$-independent} if $\ntwo(S) = \varnothing$, and let the \defstyle{$\ntwo$-independence number} $\alpha_\ntwo (G)$ be the size of a largest $\ntwo$-independent set. Note that if $S$ is $\ntwo$-independent, then $\deficiency(G,S)=|S|$, so $\alpha_\ntwo(G)$ is always at most $\deficiency(G)$. 

To see that an $(X,Y)$-bigraph $G$ may not have a path $X$-cover with at most $\alpha_\ntwo(G)$ paths, even if $G$ is balanced and has a high connectivity, consider the following family of examples. Fix an integer~$k$ between 1 and $n$, and let $X = \{x_1, \dots, x_n\}$ and $Y = \{y_1, \dots, y_n\}$ with  $x_i y_j \in E(G)$ when $i \le k$ or $j \le k$. Then $\alpha_\ntwo(G)=1$, since any two vertices share the neighbor $y_1$, but $\deficiency(G) = n-2k+1$ (choose $S=\{x_k,x_{k+1}, \dots, x_n\}$), and in fact it can be checked that a minimum path $X$-cover contains $n-2k+1$ paths.

However, in all the cases of Conjecture~\ref{conjecture:path cover} we consider where $G$ is a \emph{regular} graph, $\alpha_\ntwo(G)$ paths suffice for a path $X$-cover of $G$.  Whether this holds for all regular bigraphs~$G$ is an open question that would have far-reaching consequences. For example, a result of Singer~\cite{singer1938theorem} states that the incidence graph of any classical projective plane is Hamiltonian. The proof relies on algebra over finite fields, but the claim above would give a purely graph-theoretic reason that these incidence graphs are always traceable, since the incidence graph $G$ of any projective plane must have $\alpha_\ntwo(G)=1$.

More generally, a hypergraph $H$ is \defstyle{covering} if every pair of vertices of $H$ lie on a common edge: in other words, $\alpha(H)=1$. Lu and Wang~\cite{lu2021hamiltonian} prove that every $\{1,2,3\}$-uniform covering hypergraph has a Hamiltonian Berge cycle. This implies Conjecture~\ref{conjecture:path cover} for $(X,Y)$-bigraphs $G$ with maximum degree $3$ in $Y$ and $\alpha_\ntwo(G)=1$.

\subsection{Our results}

Our first result states that Conjecture~\ref{conjecture:path cover} holds for forests:

\begin{proposition}\label{result:forest}
If $G$ is an $(X,Y)$-bigraph with no cycles, then $G$ has a path $X$-cover of size at most $\deficiency(G)$.
\end{proposition}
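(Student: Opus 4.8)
The plan is to prove this by induction on the structure of the forest, covering each tree component separately. Since a forest is a disjoint union of trees, and the $\ntwo$-deficiency is additive over components (a set $S\subseteq X$ decomposes into its parts in each component, and $\ntwo(S)$ likewise decomposes since no $Y$-vertex has neighbors in two different components), it suffices to prove that a single tree $T$ has a path $X$-cover of size at most $\deficiency(T)$, and then sum over components. So I would first reduce to the case where $G$ is a tree.

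For a single tree, I would use induction, picking a convenient vertex to remove or a convenient leaf structure to exploit. The natural approach is to root the tree and process it from the leaves upward, or to identify a ``pendant'' path that can be peeled off. First I would observe the base cases: a tree that is a single vertex of $X$ needs one path (and has $\deficiency = 1$), while a tree consisting of a single $Y$-vertex has empty $X$ and needs zero paths. The key reduction step I anticipate is this: find a leaf of the tree and consider removing it together with enough of its neighborhood to apply induction, tracking how $\deficiency$ changes. A cleaner inductive handle may be to root the tree at a leaf in $Y$ (if one exists) and examine a deepest $X$-vertex $x$ whose children are all leaves in $Y$; such an $x$ has at most one non-leaf neighbor (its parent), so it can serve as an endpoint of a path that either stops at $x$ or continues upward through its parent.

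The heart of the argument should be a charging scheme that relates the number of paths to $\deficiency$. The plan is to build the path $X$-cover greedily while maintaining the invariant that, at each stage, the number of paths used so far plus the $\ntwo$-deficiency of the remaining graph does not exceed $\deficiency(T)$. Concretely, when I decide to start a new path at some $X$-vertex $x$ that cannot be attached to an existing path, I need to ``pay'' for that new path by exhibiting a corresponding increase in the deficiency of what remains, i.e.\ I want each newly-forced path to correspond to a vertex $x$ that lies in a witnessing set $S$ with $|S|-|\ntwo(S)|$ accounting for the count. Because trees have no cycles, any two $X$-vertices with a common $Y$-neighbor are ``locally'' constrained, and I expect to show that a greedy leaf-to-root pass produces exactly the paths that are unavoidable.

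The main obstacle I foresee is producing the right witnessing set $S\subseteq X$ that certifies the lower bound $\deficiency(T)\ge (\text{number of paths in my cover})$, so that my constructed cover is provably optimal up to the bound. In other words, the easy direction is to construct \emph{some} $X$-cover; the hard direction is showing its size is controlled by $\deficiency(T)$. I expect this to reduce to a careful analysis of how the roots/endpoints of paths in my greedy cover each contribute to $|S|-|\ntwo(S)|$: each path contributes one more $X$-vertex than it uses ``branching'' $Y$-vertices in $\ntwo(S)$, so collecting all $X$-vertices of the cover into $S$ should yield $|S|-|\ntwo(S)|$ equal to the number of paths, using the acyclicity to guarantee that every $Y$-vertex shared by two $X$-vertices of $S$ is genuinely counted once in $\ntwo(S)$ and that these shared vertices are exactly the internal $Y$-vertices of the paths. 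Making this counting exact—and ruling out overcounting when a $Y$-vertex could have three or more neighbors in $S$—is where the tree structure must be leveraged most carefully.
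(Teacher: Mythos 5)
Your high-level strategy matches the paper's: reduce to a single tree, induct by peeling off a deepest leaf structure, and maintain the invariant that each newly created path is paid for by a corresponding increase in the deficiency. But the one concrete mechanism you offer for what you correctly call the hard direction --- taking $S$ to be all the $X$-vertices of the greedy cover and claiming $|S|-|\ntwo(S)|$ equals the number of paths --- does not work. Consider the spider $T$ with center $x^*\in X$ adjacent to $y_1,\dots,y_k\in Y$, where each $y_i$ has one further child $x_i\in X$. A minimum path $X$-cover has $k-1$ paths (one path $x_1 y_1 x^* y_2 x_2$ plus $k-2$ singletons), but with $S=X$ every $y_i$ lies in $\ntwo(S)$ (its two neighbors $x^*$ and $x_i$ are both in $S$), so $|S|-|\ntwo(S)|=(k+1)-k=1$, far below the path count. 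The failure mode is not a $Y$-vertex with three or more neighbors in $S$, as you anticipate, but a $Y$-vertex with exactly two neighbors in $S$ that is not internal to any path of the cover. The correct witness here is $S=\{x_1,\dots,x_k\}$, which \emph{excludes} the high-degree vertex $x^*$.

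This exclusion is exactly what the paper's proof has to engineer: in its Case 2b (a deepest $Y$-vertex with a single child whose parent $x^*$ has degree at least $3$), the witnessing set inherited from the inductive call is first modified by explicit exchange arguments (swapping $x^*$ out for a pendant $x_1$, or simply deleting $x^*$ while showing $\ntwo$ drops by at least as much) to guarantee $x^*\notin S$; only then does the count $\deficiency(G, S\cup\{x_k\}) = \deficiency(G',S)+1$ go through. Without some such exchange step your charging scheme cannot close, so as written the proposal has a genuine gap at its central step. The remainder of the plan --- the component reduction via additivity of $\deficiency$, the base cases, discarding $Y$-leaves, and the leaf-upward induction --- is sound and agrees with the paper.
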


To strengthen Proposition~\ref{result:forest}, we go in two directions: we consider graphs with low maximum degree and graphs with high girth. In the first case, we begin by proving:

\begin{theorem}\label{result:3-regular}
If $G$ is a $3$-regular $(X,Y)$-bigraph, then $G$ has a path $X$-cover of size at most~$\alpha_\ntwo(G)$.
\end{theorem}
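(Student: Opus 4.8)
The plan is to reformulate the problem as a maximization over \emph{linear forests} on $X$ and then extract an $\ntwo$-independent transversal by an extremal exchange argument. First I would observe that we may assume every path in a path $X$-cover begins and ends in $X$ (trim any $Y$-endpoints), so each path reads $x_0\,y_1\,x_1\cdots y_m\,x_m$ and each \emph{used} vertex of $Y$ sits between two consecutive vertices of $X$. Recording, for each used $y$, the pair of $X$-vertices it joins produces a spanning subgraph of $X$ of maximum degree $2$ with no cycle, i.e.\ a linear forest; conversely any linear forest whose edges can be realized by \emph{distinct} vertices of $Y$ (each edge $xx'$ assigned a common neighbor $y$, all assignments distinct) yields a valid path $X$-cover. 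Since a spanning linear forest on the $n:=|X|$ vertices with $|M|$ edges has exactly $n-|M|$ components, the number of paths equals $n-|M|$. Hence minimizing the number of paths is the same as maximizing the number of used vertices of $Y$, and it suffices, for a linear forest $M$ with the maximum number of edges (equivalently, a minimum path $X$-cover) whose components are the paths $P_1,\dots,P_k$, to produce an $\ntwo$-independent set of size $k$.

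Next I would try to pick one representative $r_i\in X\cap P_i$ per component so that $\{r_1,\dots,r_k\}$ is $\ntwo$-independent — that is, no vertex of $Y$ is adjacent to two representatives — taking each $r_i$ to be an endpoint of $P_i$. The easy half of the analysis uses maximality of $M$ directly: if endpoint-representatives $r_i$ and $r_j$ ($i\ne j$) share a neighbor $y\in Y$ that is \emph{unused}, then $r_i,r_j$ are endpoints of distinct components each of forest-degree at most $1$, so realizing the edge $r_ir_j$ by $y$ merges $P_i$ and $P_j$ into one path without creating a cycle, contradicting the maximality of $|M|$. This is where $3$-regularity first helps: each endpoint has forest-degree $1$ and therefore two spare edges available for such merges.

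The main obstacle is the remaining case, where the shared neighbor $y$ is already \emph{used}. Then $y$ joins two consecutive vertices $u,v$ of some path, and since $r_i,r_j$ lie in different components at least one of them must be the \emph{third} neighbor $w$ of $y$; a direct local swap (re-realizing $uv$ and freeing $y$) turns out to be count-neutral, so no single exchange resolves the conflict. Here the exact degree $3$ must be exploited globally: every vertex of $Y$ has precisely one neighbor outside any used pair, and every endpoint has precisely two free edges. I expect to organize this via the perfect-matching decomposition $E(G)=M_1\cup M_2\cup M_3$ (König), whose $2$-factor $M_1\cup M_2$ is a disjoint union of even alternating cycles: deleting one vertex of $Y$ per cycle already gives a path $X$-cover, and the third matching $M_3$ supplies exactly the edges needed to merge cycles and to reroute around used-$Y$ conflicts. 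I anticipate that the crux is a trichotomy of the components according to length parity and to how $M_3$ interconnects them — the ``good/bad/ugly'' split — arranged so that whenever the representative construction is blocked by a used-$Y$ conflict one can reroute through $M_3$ either to enlarge $M$ or to repair the conflict, and so that the surviving independent representatives number at least $k$.
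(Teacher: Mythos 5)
There is a genuine gap: your argument stops exactly where the real difficulty begins. The reduction to realized linear forests and the ``easy half'' (two endpoint-representatives sharing an \emph{unused} common neighbor contradict maximality of $|M|$) are fine, but the case of a conflict through a \emph{used} vertex $y$ is the whole problem, and your proposal only names it. You correctly observe that the obvious local swap is count-neutral, and then the remainder is explicitly speculative (``I expect,'' ``I anticipate''): you do not show that the conflicts can be rerouted through $M_3$, nor that the rerouting terminates, nor even that \emph{some} minimum path $X$-cover admits a conflict-free choice of one endpoint per component. That last point is a real structural claim, not a formality: you have $2^k$ ways to choose representatives, the conflicts form a graph on the endpoints, and you need an independent transversal of the components; nothing in the proposal rules out, say, a component both of whose endpoints are forced into conflicts with other components. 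So as written this is a plan plus an honest acknowledgment of the missing step, not a proof.

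It is worth noting how the paper avoids this entirely by running the argument in the opposite direction. Rather than starting from a minimum path cover and trying to extract an $\ntwo$-independent set of the same size, it starts from a $2$-factor $\mathcal C$ (cycles covering $G$), takes $S$ to be a \emph{maximal} $\ntwo$-independent set with at most one vertex per cycle, and then builds a path cover with exactly $|S|$ paths: maximality of $S$ guarantees that every remaining cycle $C$ has a vertex $x(C)$ sharing a neighbor $y$ with some $s\in S$, and $3$-regularity forces $y$ to lie on $C$ or on $C(s)$, giving an attachment edge; a simple bookkeeping rule (never attach through both cycle-neighbors of the same $s$) keeps every component traceable. The inequality $|\mathcal P|\le|S|\le\alpha_\ntwo(G)$ then comes for free, with no exchange argument and no used-vertex conflicts to resolve. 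If you want to salvage your direction, you would need to prove the independent-transversal claim for minimum covers, which appears to be at least as hard as the theorem itself; reversing the direction as the paper does is the cleaner route.
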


The proof of Theorem~\ref{result:3-regular} begins by taking a $2$-factor of $G$, covering the graph (and, in particular,~$X$) with pairwise vertex-disjoint cycles. If we generalize to graphs with maximum degree $3$, we are unable to do this, but if we cover as much of $G$ with cycles as possible, we are left with a forest. Once we deal with the interaction between the forest and the cycles, we can combine the arguments of Theorem~\ref{result:3-regular} with Proposition~\ref{result:forest} to prove a result for all graphs with maximum degree $3$:

\begin{theorem}\label{result:max degree 3}
If $G$ is an $(X,Y)$-bigraph with maximum degree at most $3$, then $G$ has a path $X$-cover of size at most~$\deficiency(G)$.
\end{theorem}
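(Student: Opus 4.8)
The plan is to mirror the strategy flagged in the introduction: replace the $2$-factor used for Theorem~\ref{result:3-regular} by a maximal packing of vertex-disjoint cycles, and glue the resulting cycle analysis to the forest analysis of Proposition~\ref{result:forest}. Concretely, I would first fix a collection $\mathcal C$ of pairwise vertex-disjoint cycles in $G$ that is maximal under inclusion. Since no further vertex-disjoint cycle can be added, the induced subgraph $F := G[V(G)\setminus V(\mathcal C)]$ contains no cycle and is therefore a forest. Because $\Delta(G)\le 3$, every vertex lying on a cycle of $\mathcal C$ already spends two of its (at most three) edges inside its cycle, so it sends at most one edge out of that cycle; this single ``escape edge'' per cycle vertex is what makes the interaction between $\mathcal C$ and $F$ tractable. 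I would record two elementary moves: (i) deleting one $Y$-vertex from a bipartite cycle $C$ turns it into a single path covering all of $X\cap V(C)$; and (ii) if a path in $F$ has an endpoint adjacent (via an escape edge) to a vertex of some $C$, then instead of stopping we may thread the path once around $C$, absorbing all of $X\cap V(C)$ into that path at no additional cost.

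Next I would set up the bookkeeping that lets me charge paths against $\deficiency(G)$. The key clean fact is that $\deficiency(G)\ge\deficiency(F)$: if $S_F\subseteq X\cap V(F)$ is a witness for $\deficiency(F)$, then every $Y$-vertex with two neighbors in $S_F$ must have both of those neighbors inside $F$ (a $Y$-vertex on a cycle has at most one neighbor outside its cycle, hence at most one in $X\cap V(F)$), so $\ntwo_G(S_F)=\ntwo_F(S_F)$ and $\deficiency(G,S_F)=\deficiency(F)$. Applying Proposition~\ref{result:forest} to $F$ gives a path cover of $X\cap V(F)$ by at most $\deficiency(F)\le\deficiency(G)$ paths, with $S_F$ as a deficiency witness. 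It therefore remains to cover $X\cap V(\mathcal C)$ while keeping the total number of paths within budget.

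I would then classify the cycles of $\mathcal C$ by how they attach to the rest of the graph, producing the good/bad/ugly split suggested by the notation $\cycleA,\cycleB,\cycleC$. Cycles in $\cycleA$ are those that can be absorbed for free using move~(ii) --- either directly into a forest path, or by chaining through escape edges to other cycles until the chain reaches $F$; absorbing these does not increase the path count. Cycles in $\cycleB$ are those that must start a path of their own (for instance an isolated $4$-cycle, or the end of a chain with no access to $F$); for each such cycle I would select a single representative vertex $x\in X\cap V(C)$ and throw it into a growing witness set $\vtxB$, with the intention that $\deficiency$ increases by exactly one per cycle. The cycles in $\cycleC$ are the genuinely tangled ones --- collections of cycles joined to one another (but not to $F$) by escape edges, forming a bounded-degree subgraph with no forest outlet --- and on the union of these I would run the merging argument from the proof of Theorem~\ref{result:3-regular} (repeatedly splicing two paths together at a shared neighbor of their endpoints) to cover their $X$-vertices with a number of paths bounded by the corresponding $\ntwo$-deficiency. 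The final count is $\deficiency(F)$ forest paths, plus one path per cycle in $\cycleB$, plus the contribution of $\cycleC$; summing the witnesses $S_F\cup\vtxB\cup\vtxC$ should give a single set $S\subseteq X$ with $\deficiency(G,S)$ at least the total number of paths.

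The main obstacle is precisely the ``interaction'' step: verifying that assembling the witnesses is loss-free, i.e.\ that adding the cycle representatives $\vtxB$ (and the $\cycleC$-witness $\vtxC$) to $S_F$ does not inflate $\ntwo(S)$ and thereby erode the gains. Each representative $x\in X\cap V(C)$ contributes its two in-cycle $Y$-neighbors and at most one escape edge, and I must choose $x$ so that none of these $Y$-vertices acquires a second neighbor already in $S$; the fact that an unabsorbed cycle has very limited access to $F$ (otherwise move~(ii) would have placed it in $\cycleA$) is exactly what should make a safe choice available, but making this simultaneous-choice argument airtight across all of $\cycleB\cup\cycleC$ --- together with checking that splicing paths in $\cycleC$ never recreates a cycle that contradicts the maximality of $\mathcal C$ --- is where the real work lies.
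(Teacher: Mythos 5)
Your overall shape --- a maximal cycle packing, a forest remainder handled by Proposition~\ref{result:forest}, a good/bad/ugly classification of the cycles, and a merging step borrowed from Theorem~\ref{result:3-regular} --- matches the paper's strategy, and your observation that $\deficiency(G,S_F)=\deficiency(F,S_F)$ for $S_F\subseteq X\cap V(F)$ is correct. But there are two genuine gaps, and you have flagged only one of them. The one you flag (assembling the witnesses $S_F\cup\vtxB\cup\vtxC$ without inflating $\ntwo$) is indeed the crux, and ``a safe choice should be available'' is not a proof: the paper resolves it by processing the cycles in a specific order and using the \emph{Do Something Else} device to steer the choice of $x(C)$ away from vertices that would create common neighbors with previously committed representatives. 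Only the cycles whose representative ends up with no such conflict (the set $\cycleA$) contribute to the witness and are charged a new path; every conflicted cycle is demoted to $\cycleB$ or $\cycleC$ and must be absorbed for free rather than charged.

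The gap you do not flag is the order of operations for the forest. You propose to cover $F$ by Proposition~\ref{result:forest} first and then absorb cycles into those paths via your move~(ii), but move~(ii) only applies when a forest path \emph{ends} next to a cycle. A cycle $C$ may meet $F$ only at vertices that are internal to every forest path; the path must then enter $C$ at one attachment point and leave at the other, and going around $C$ from $y^+(C)$ to $y^-(C)$ necessarily misses the vertex $x(C)\in X$ between them, which must still be covered without increasing the path count. The paper's solution is to build the auxiliary forest $F^*$ \emph{before} invoking Proposition~\ref{result:forest}: each bad cycle is represented by two artificial vertices $x(C)$ and $y^*(C)$ wired to the cycle's neighbors in $F$, so that the proposition's path cover automatically routes through (or terminates at) the bad cycles and separately covers $x(C)$. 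Proving that $F^*$ is acyclic requires more than inclusion-maximality of $\mathcal C$ --- the paper chooses $\mathcal C$ to maximize the number of covered vertices and, subject to that, minimize the number of cycles, and both conditions are used in that argument. Without some such device your absorption step can fail, and your $\cycleA$/$\cycleB$ dichotomy is not even well defined until after the forest cover has been computed.
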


It is particularly interesting to strengthen Theorem~\ref{result:3-regular} to Theorem~\ref{result:max degree 3} because if $G$ has maximum degree at most $3$, then so does every subgraph of $G$. As a result, we obtain a necessary and sufficient condition for an $(X,Y)$-bigraph $G$ of maximum degree $3$ to have the property that for all $S \subseteq X$, there is a set of at most $\deficiency(G)$ pairwise vertex-disjoint paths whose intersection with $X$ is precisely $S$.

Conjecture~\ref{conjecture:path cover} holds for regular bigraphs of any degree if we add a condition on the \defstyle{girth} of $G$, that is, the length of the shortest cycle in $G$.

\begin{theorem}\label{result:high girth}
Let $G$ be an $(X,Y)$-bigraph with maximum degree at most $d$ and girth at least $4ed^2+1$, and assume that there exists a collection of pairwise vertex-disjoint cycles in $G$ that cover all of~$X$. (In particular, such a collection is guaranteed to exist if $G$ is $d$-regular.)

Then $G$ has a path $X$-cover of size at most $\alpha_\ntwo(G)$. 
\end{theorem}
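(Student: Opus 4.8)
The plan is to avoid manipulating the cycle cover at all: I would simply cut each cycle into a path and argue that the girth hypothesis \emph{already} forces the number of cycles to be at most $\alpha_\ntwo(G)$, so that no merging is needed. Write $\{C_1,\dots,C_m\}$ for the guaranteed collection of pairwise vertex-disjoint cycles covering $X$. Deleting a single edge from each $C_i$ turns it into one path containing every vertex of $C_i$, and in particular every vertex of $C_i\cap X$; since the cycles are vertex-disjoint, the resulting $m$ paths are pairwise vertex-disjoint and cover all of $X$. Hence it suffices to prove $m\le\alpha_\ntwo(G)$.

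For the upper bound on $m$, recall that $G$ is bipartite, so each $C_i$ has even length equal to its number of vertices, split evenly between $X$ and $Y$. Since the girth forces $|C_i|\ge 4ed^2+1$, each cycle contains more than $2ed^2$ vertices of $X$. As the $C_i$ are disjoint and cover $X$, we get $|X|=\sum_i |C_i\cap X| > 2ed^2\, m$, i.e.\ $m < |X|/(2ed^2)$. For the lower bound on $\alpha_\ntwo(G)$, I would pass to the auxiliary graph $G^{(2)}$ on vertex set $X$ in which $x\sim x'$ exactly when $x$ and $x'$ have a common neighbor in $Y$; by definition an $\ntwo$-independent set is precisely an independent set of $G^{(2)}$, so $\alpha_\ntwo(G)=\alpha(G^{(2)})$. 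Each $x\in X$ has at most $d$ neighbors in $Y$, each of which has at most $d-1$ further neighbors in $X$, so $G^{(2)}$ has maximum degree at most $d(d-1)$. A greedy (Caro--Wei) bound then gives $\alpha(G^{(2)})\ge |X|/(d(d-1)+1)$.

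Combining the two estimates, and using $d(d-1)+1\le d^2\le 2ed^2$, yields the single chain
\[
	m \;<\; \frac{|X|}{2ed^2} \;\le\; \frac{|X|}{d(d-1)+1} \;\le\; \alpha_\ntwo(G),
\]
so $m\le\alpha_\ntwo(G)$ and the cut-open cycles form the desired path $X$-cover. The only real content is this numerical comparison: the constant $4e$ in the girth bound is exactly what makes the gap between ``few, because long'' cycles and ``many'' $\ntwo$-independent vertices positive with room to spare. I expect the step most worth stating carefully to be the lower bound on $\alpha_\ntwo(G)$, namely that bounded degree in $Y$ keeps $G^{(2)}$ sparse and hence forces a large independent set; the rest is counting.

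For the parenthetical $d$-regular case, the required cycle cover exists automatically: a $d$-regular bipartite graph decomposes into $d$ perfect matchings, and the union of any two of them is a $2$-factor, i.e.\ a disjoint union of cycles covering all of $X$. It is instructive to contrast this with the low-girth regime of Theorem~\ref{result:3-regular}, where cycles may be short, $m$ can genuinely exceed $\alpha_\ntwo(G)$, and one is forced to merge cycles rather than merely cut them.
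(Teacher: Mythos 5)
Your proof is correct, but it reaches the key inequality $|\mathcal C|\le\alpha_\ntwo(G)$ by a genuinely different route than the paper. The paper makes the same reduction (cut each cycle of the cover open to get a path $X$-cover of size $|\mathcal C|$, so it suffices to bound the number of cycles), but it proves the bound probabilistically: it selects one vertex of $X$ uniformly at random from each cycle and applies the Lov\'asz Local Lemma, with the bad events being that two selected vertices on different cycles share a neighbor, to conclude that with positive probability the selected transversal is $\ntwo$-independent; the constant $4e$ in the girth hypothesis is exactly what makes the Local Lemma computation close. Your argument replaces this with a deterministic count: the girth forces each cycle to contain more than $2ed^2$ vertices of $X$, so $|\mathcal C|<|X|/(2ed^2)$, while the greedy bound applied to the common-neighbor graph $G^{(2)}$ (whose maximum degree is at most $d(d-1)$) gives $\alpha_\ntwo(G)\ge |X|/(d^2-d+1)$, and $d^2-d+1\le 2ed^2$. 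All of these steps check out, and your approach is not only more elementary but quantitatively stronger: girth at least $2(d^2-d+1)$ already suffices for it, improving the paper's $4ed^2+1$ by roughly a factor of $2e$. What the Local Lemma buys in exchange is structural information your count does not provide, namely an $\ntwo$-independent set that is a \emph{transversal} of the cycle cover (one vertex per cycle), which is the form of statement that parallels the constructions in Theorems~\ref{result:3-regular} and~\ref{result:max degree 3}; but for the theorem as stated, your counting argument is a complete proof.
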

\section{Forests}

\begin{proof}[Proof of Proposition~\ref{result:forest}]
We may assume that $G$ has no leaves in $Y$, since a vertex in $Y$ of degree $1$ does not contribute to $\deficiency(G,S)$ for any $S$, and it does not help cover more of $X$ by paths. We may also assume that $G$ is a tree; if $G$ has multiple components, we can solve the problem on each component separately.

We induct on $|X|$. When $|X|=1$, we have $\deficiency(G) = \deficiency(G, X) = 1$, and we can cover $X$ by a single path of length $0$.

When $|X|>1$, consider $G$ as a rooted tree with an arbitrary root in $X$. Let $x \in X$ be a leaf of $G$ at the furthest distance possible from the root, and let $y \in Y$ be the parent vertex of $x$.

\textbf{Case 1:} $y$ has other children.\nopagebreak

Let $x_1, \dots, x_k$ be all the children of $y$ (including $x$); by the case, $k\ge 2$. Since $x$ was chosen to be as far from the root as possible, each $x_i$ must be a leaf. Delete $x_1, x_2, \dots, x_k, y$ from $G$ to get $G'$.

Let $S \subseteq X - \{x_1, \dots, x_k\}$ be the set such that $\deficiency(G') = \deficiency(G', S)$. We claim that
\[
\deficiency(G, S \cup \{x_1, \dots, x_k\}) = \deficiency(G', S) + k - 1.
\] 
On one hand, $|S \cup \{x_1, \dots, x_k\}| = |S|+k$. On the other hand, $\ntwo_G(S \cup \{x_1, \dots, x_k\}) = \ntwo_{G'}(S) \cup \{y\}$, so $|\ntwo_G(S \cup \{x_1, \dots, x_k\})| = |\ntwo_{G'}(S)|+1$. In particular, $\deficiency(G) \ge \deficiency(G',S) + k-1$.

By the inductive hypothesis, $G'$ has a path $X$-cover by at most $\deficiency(G',S)$ paths. Add $k-1$ more paths to that set: the path $(x_1, y, x_2)$ and the length-$0$ paths $(x_3), \dots, (x_k)$. This is a path $X$-cover of $G$ by at most $\deficiency(G',S) + k-1 \le \deficiency(G)$ paths, completing the case.

\textbf{Case 2:} $y$ has no other children.

Let $x^* \in X$ be the parent vertex of $y$.

\textbf{Case 2a:} $\deg(x^*) \le 2$ (this includes the case where $x^*$ is the root and $\deg(x^*)=1$).

Delete $x$ and $y$ from $G$ to get $G'$. For every $S \subseteq X - \{x\}$, we have $\deficiency(G,S) = \deficiency(G',S)$, since~$y$ cannot be in $\ntwo_G(S)$ and all other vertices of $Y$ are still in $G'$. Therefore $\deficiency(G) \ge \deficiency(G')$. 

By the inductive hypothesis, $G'$ has a path $X$-cover by at most $\deficiency(G')$ paths. By the case, $x^*$ is a leaf of $G'$ (or an isolated vertex), so the path that covers $x^*$ must begin or end at $x^*$. Extend that path to go through $y$ and $x$, and we get a path $X$-cover of $G$ by $\deficiency(G') \le \deficiency(G)$ paths, completing the case.

\textbf{Case 2b:} $\deg(x^*) \ge 3$.

Let $y_1, \dots, y_k$ be all of the children of $x^*$ (including $y$); by the case, $k\ge 2$. No vertices of $y$ are leaves, so each has a child. By our choice of $x$, those children are all as far from the root as possible, so they must all be leaves. If any of $y_1, \dots, y_k$ have multiple children, then we can proceed as in \textbf{Case~1}, so assume each $y_i$ has a single child $x_i$. Delete $x_k$ and $y_k$ from $G$ to get $G'$.

Let $S \subseteq X - \{x_k\}$ be the set such that $\deficiency(G') = \deficiency(G',S)$. We may assume that $x^* \notin S$ by one of the following modifications:
\begin{itemize}
\item If $x^* \in S$ and $x_1 \notin S$, replace $S$ by $S' = S \cup \{x_1\} - \{x^*\}$. Then $|S'| = |S|$ and $|\ntwo_{G'}(S')| \le |\ntwo_{G'}(S)|$: $y_1$ is in neither $\ntwo_{G'}(S)$ nor $\ntwo_{G'}(S')$, and no other vertices in $Y$ have any neighbors in $S'$ that they did not have in $S$. So $\deficiency(G',S') \ge \deficiency(G',S)$.

\item If $x^* \in S$ and $x_1 \in S$, replace $S$ by $S' := S-\{x^*\}$. Then $|S'| = |S|-1$, but $|\ntwo_{G'}(S')| \le |\ntwo_{G'}(S)|-1$ as well, since $y_1 \in \ntwo_{G'}(S)$ but $y_1 \notin \ntwo_{G'}(S')$. (No other vertices in $Y$ have any neighbors in $S'$ that they did not have in $S$.) So $\deficiency(G',S') \ge \deficiency(G',S)$.
\end{itemize}
When $x^* \notin S$, we have $\deficiency(G, S \cup \{x_k\}) = \deficiency(G', S)+1$, because $|S \cup \{x_k\}| = |S|+1$, while~$\ntwo_{G}(S \cup \{x_k\})=\ntwo_{G'}(S)$. Therefore $\deficiency(G) \ge \deficiency(G') + 1$.

By the inductive hypothesis, $G'$ has a path $X$-cover by at most $\deficiency(G')$ paths. Add the path~$(x_k)$ to get a path $X$-cover of $G$ by $\deficiency(G') + 1 \le \deficiency(G)$ paths, completing the case and the proof.
\end{proof}

\section{3-regular graphs}

It is a standard result (Corollary 3.1.13 in~\cite{west1996introduction}) that every regular bipartite graph has a perfect matching. Removing a perfect matching from a $d$-regular bipartite graph leaves a $(d-1)$-regular bipartite graph, which also has a perfect matching. The union of the two matchings provides a cover of $G$ by vertex-disjoint cycles, giving the following lemma (which is also well-known):

\begin{lemma}
\label{lemma:cycle cover}
If $G$ is a regular bipartite graph, then $G$ has a cycle cover.
\end{lemma}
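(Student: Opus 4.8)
The plan is to reduce the lemma to the existence of perfect matchings in regular bipartite graphs, the fact already recalled above (Corollary 3.1.13 in~\cite{west1996introduction}). First I would extract a perfect matching $M_1$ from $G$. Since $G$ is $d$-regular bipartite, deleting the edges of $M_1$ lowers the degree of every vertex by exactly one, leaving a spanning $(d-1)$-regular bipartite subgraph $G' = G - E(M_1)$. Assuming $d \ge 2$, $G'$ is again regular bipartite, so it too contains a perfect matching $M_2$ by the same cited result. Because $M_2$ consists of edges of $G'$, it is automatically edge-disjoint from $M_1$, so the spanning subgraph $H$ with edge set $E(M_1) \cup E(M_2)$ has every vertex of degree exactly $2$ (one edge from each matching).

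The second step is purely structural: a $2$-regular graph is a disjoint union of cycles, and since $H$ is a spanning subgraph of the bipartite graph $G$, each of these cycles is even and the collection is pairwise vertex-disjoint. This is exactly a cover of $V(G)$ by vertex-disjoint cycles, which is the desired cycle cover. Note that we use only two of the $d$ matchings guaranteed by the regularity of $G$; there is no need to decompose $G$ completely into matchings.

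The one point requiring care is the degenerate case $d \le 1$: a $1$-regular graph is itself a perfect matching and contains no cycle, so strictly speaking one should either restrict the statement to $d \ge 2$ or adopt the convention that a single edge counts as a degenerate cycle. For every application in this paper we have $d \ge 2$ (indeed $d = 3$ in Theorem~\ref{result:3-regular}), so this subtlety does not affect the use of the lemma. Beyond correctly invoking the perfect-matching result, I do not anticipate any genuine obstacle: the entire content of the lemma is the standard observation that two edge-disjoint perfect matchings glue together into a $2$-factor, and the only thing to verify carefully is the edge-disjointness of $M_1$ and $M_2$, which is immediate from the construction $M_2 \subseteq E(G')$.
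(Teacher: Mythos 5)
Your proposal is correct and follows exactly the paper's argument: take a perfect matching, remove it to get a $(d-1)$-regular bipartite graph, take a second perfect matching, and observe that the union of the two matchings is a $2$-factor, i.e.\ a disjoint union of cycles covering $V(G)$. Your remark about the degenerate case $d\le 1$ is a fair point of care that the paper glosses over, but as you note it is irrelevant to every application in the paper.
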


The existence of this lemma is the primary reason that this proof is simpler than the proof of Theorem~\ref{result:max degree 3} in the next section.  That proof begins with the same ideas, but must deal with vertices of $X$ that are not part of the initially chosen collection of cycles.

\begin{proof}[Proof of Theorem~\ref{result:3-regular}]
By Lemma~\ref{lemma:cycle cover}, we can take a cycle cover $\mathcal C$ of $G$. Let $S$ be any maximal $\ntwo$-independent subset of $X$ such that each cycle in $\mathcal C$ contains at most one vertex of $S$. To prove the claim, it suffices to construct a path cover of $G$ with exactly $|S|$ paths. We give an algorithm for this below.

Let $H$ be a subgraph of $G$ that will change over the course of the algorithm; initially, $H$ will consist of the $|S|$ cycles in $\mathcal C$ containing a vertex of $S$. We will extend $H$ to a spanning subgraph of $G$, while maintaining the properties (1) $H$ has $|S|$ components, and (2) each component of $H$ is traceable.

At each step of the algorithm, choose a cycle $C \in \mathcal C$ that is not yet contained in $H$, and $x(C) \in V(C) \cap X$.  In most cases, we make this choice arbitrarily. Occasionally, we will want to make sure that a particular vertex $w$ on a cycle $C$ not yet in $H$ will never become $x(C)$. To do so, we select~$C$ to be processed next, and choose an arbitrary vertex in $V(C) \cap X$ other than $w$ to be $x(C)$. To indicate that we do this, we say that we \defstyle{Do Something Else} with $w$; we provide details about this choice later.  

Suppose we have selected $C$ and $x(C)$.  By the maximality of $S$, we have $\ntwo(S \cup \{x(C)\}) \ne \varnothing$, so there is some vertex $s \in S$ such that $x(C)$ and $s$ have a common neighbor $y$. Let $C(s)$ be the cycle in $\mathcal C$ containing $s$. The vertex $y$ must lie on either $C$ or $C(s)$, since otherwise $y$ would have four neighbors: $x(C)$, $s$, and its two neighbors on the cycle in $\mathcal C$ containing $y$. We extend $H$ by adding cycle $C$ to $H$, and either the edge $x(C)y$ (if $y$ lies on $C(s)$) or $sy$ (if $y$ lies on $C$). This ends one step of the algorithm.

This step maintains the property that $H$ has $|S|$ components, since cycle $C$ has been joined to an existing component of $H$. To maintain the property that each component of $H$ is traceable, we must clarify when we Do Something Else. 

Consider an arbitrary $s \in S$; let $C(s)$ be the cycle of $\mathcal C$ containing $s$, and let $y_1, y_2$ be the two neighbors of $s$ along $C(s)$. Initially, the component of $H$ containing $s$ is just $C(s)$.  There are three ways that $C(s)$ can potentially be added to $H$, namely via an edge from any of $s$, $y_1$, or $y_2$ going to another cycle in $\mathcal C$. The component remains traceable if any one of these edges is used to extend it: in that case, we can extend that edge to a Hamiltonian path by going the long way around both cycles. The component also remains traceable if it is extended both using an edge from $s$ and using an edge from $y_1$. In that case, delete edge $sy_1$, obtaining a long path containing $s$ and $y_1$ joining two cycles; extend that path by going the long way around both of those cycles. The same is true if $y_1$ is replaced by $y_2$.

However, we must ensure that the component of $H$ containing $s$ is never extended by using edges from both $y_1$ and $y_2$. Suppose that a step of the algorithm extended the component of $H$ containing~$s$ via an external edge to $y_1$, and $y_2$ has a neighbor $w$ in some $C \in \mathcal C$ not yet contained in $H$.  In this situation, we Do Something Else with $w$.  This ensures the component of $H$ containing $s$ cannot be extended using edges from both $y_1$ and $y_2$, because one of those edges goes to $C$, and $C$ will become part of $H$ in the next step of the algorithm and hence will not be considered at later stages of the algorithm. As a result, no component of $H$ is ever prevented from being traceable.

At the end of the algorithm, we have a spanning subgraph $H$ with $|S|$ traceable components. By taking a Hamiltonian path in each component, we obtain a path cover of $G$ with $|S|$ paths, completing the proof.
\end{proof}

\section{Graphs with maximum degree 3}

\begin{proof}[Proof of Theorem~\ref{result:max degree 3}]
We will prove the theorem by describing an algorithm that constructs a path $X$-cover $\mathcal P$ and a set $S\subseteq X$ with $|\mathcal P| = \deficiency(G, S)$.

To begin the algorithm, let $\mathcal C$ be a collection of vertex-disjoint cycles in $G$ satisfying the following conditions:
\begin{enumerate}
\item The union of the cycles contains as many vertices of $G$ as possible.
\item Subject to condition 1, there are as few cycles as possible.
\end{enumerate}
As a consequence of condition 1, deleting the vertices in $\mathcal C$ from $G$ leaves a forest, which we call $F$.

In the next phase of the algorithm, we process the cycles in $\mathcal C$, one at a time. This phase has two goals. First, for each $C \in \mathcal C$, we will choose a designated vertex $x(C) \in V(C) \cap X$. Intuitively, $x(C)$ will be the only vertex of $C$ which \emph{may} become part of the high-$\ntwo$-deficiency set $S$ we construct. We define $y^+(C)$ and $y^-(C)$ to be the two neighbors of $x(C)$ along $C$. Second, we will split $\mathcal C$ into three sets: $\cycleA$, $\cycleB$, and $\cycleC$. Intuitively, if $C \in \cycleA$, then $x(C)$ is far from any problems; if $C \in \cycleB$, then $x(C)$ is too close to the forest $F$; finally, if $C \in \cycleC$, then $x(C)$ is too close to $x(D)$ for some $D \in \cycleA \cup \cycleB$.

In most cases, we arbitrarily choose an unprocessed cycle $C$ to process next, and arbitrarily choose $x(C) \in V(C) \cap X$. Occasionally, as in the proof of Theorem~\ref{result:3-regular}, we will want to make sure that a particular vertex $w$ on an unprocessed cycle $C$ will never become $x(C)$. To do so, we select $C$ to be processed next, and choose an arbitrary vertex in $V(C) \cap X$ other than $w$ to be $x(C)$. As in Theorem~\ref{result:3-regular}, to indicate that we do this, we say that we \defstyle{Do Something Else} with $w$.

To decide what to do with a cycle $C$ as we process it, we consider the following cases, \emph{in order}, choosing the first that applies:

\textbf{Case 1:} $x(C)$ has a common neighbor with $x(D)$ for some $D \in \cycleA \cup \cycleB$, and that common neighbor lies on either $C$ or $D$.  In other words, at least one edge
\[
	e(C) \in \{x(C)y^+(D), x(C)y^-(D), y^+(C)x(D), y^-(C) x(D)\}
\]
must exist in $G$. (If multiple choices of $D$ or of $e(C)$ are possible, then fix one of them.)
In this case, we place $C$ in $\cycleC$; we say that \defstyle{$C$ attaches to $D$ at $u$}, where $u$ is the endpoint of $e(C)$ in~$D$. We save the edge $e(C)$ for reference; later, we will use it to extend a path covering $D$ to also cover~$C$. 

Additionally, if $e(C) = x(C) y^{\pm}(D)$ and the vertex $y^{\mp}(D)$ (that is, whichever of $y^+(D), y^-(D)$ is not an endpoint of $e(C)$) is adjacent to a vertex $w$ on an unprocessed cycle, we Do Something Else with~$w$.

\textbf{Case 2:} At least one of $x(C)$, $y^+(C)$, or $y^-(C)$ has a neighbor in $F$.  In this case, we place $C$ in~$\cycleB$. Additionally, if $y^{\pm}(C)$ has a neighbor in $F$ and $y^{\mp}(C)$ has a neighbor $w$ on an unprocessed cycle, we Do Something Else with $w$.

\textbf{Case 3:} Neither case 1 nor case 2 occurs. In this case, we simply place $C$ in $\cycleA$.

This concludes the second phase (or the \defstyle{processing phase}) of the algorithm.

In the third phase of the algorithm, we create an auxiliary graph $F^*$ (which is not precisely a subgraph of $G$) containing $F$ and some extra vertices representing the elements of $\cycleB$. For each $C \in \cycleB$:
\begin{itemize}
\item We add $x(C)$ to $F^*$, together with the edge to its neighbor in $F$, if there is one.
\item We add an artificial vertex $y^*(C)$ to $F^*$ that is adjacent to $x(C)$ and to the neighbors of both $y^+(C)$ and $y^-(C)$ in $F$, if these exist.
\end{itemize}
Before we continue, we must show that $F^*$ is a forest. Suppose for the sake of contradiction that~$F^*$ contains a cycle.  Since $F$ is acyclic, this cycle must contain either $x(C)$ or $y^*(C)$ for at least one $C \in \cycleB$.

First, consider the case that the cycle only includes the vertex $y^*(C)$ for a single $C \in \cycleB$. This means that there is a path $P$ from $y^+(C)$ to $y^-(C)$ of length at least $3$, whose internal vertices are in $F$. Now we can modify $C$, replacing $x(C)$ and the edges $y^+(C)x(C), x(C)y^-(C)$ by $P$. The resulting cycle contains more vertices that $C$, violating condition 1 in the definition of $\mathcal C$. 

Similarly, if the cycle in $F^*$ includes only the vertices $x(C)$ and $y^*(C)$ for a single $C \in \cycleB$, we can expand $C$ to include some vertices in $F$. This also violates condition 1 in the definition of $\mathcal C$. 

Finally, consider the case that the cycle in $F^*$ includes vertices $x(C)$ and/or $y^*(C)$ for multiple $C \in \cycleB$. In this case, we can extend it to a cycle in $G$: every time the cycle in $F^*$ visits $y^*(C)$, we can replace that visit by a path that enters $C$ via $y^{\pm}(C)$, goes around $C$, and leaves via either~$y^{\mp}(C)$ or $x(C)$. This cycle in $G$ contains at least as many vertices as the cycles from $\mathcal C$ it uses: it misses at most the vertex $x(C)$ from each of them, but includes a vertex in $F$ between any two of the cycles in $\mathcal C$. Therefore, we can replace multiple cycles in $\mathcal C$ by a single cycle through at least as many vertices, violating condition 2 in the definition of $\mathcal C$.

In all cases, we arrive at a contradiction, so we can conclude that $F^*$ is a forest. We give it the structure of an $(X',Y')$-bigraph by defining:
\begin{align*}
	X' &= (X \cap V(F)) \cup \{x(C) : C \in \cycleB\}, \\
	Y' &= (Y \cap V(F)) \cup \{y^*(C) : C \in \cycleB\}.
\end{align*}
By Proposition~\ref{result:forest}, we can find a path $X'$-cover $\mathcal P'$ of $F^*$ and a set $S' \subseteq X'$ such that $\deficiency(F^*,S') \ge |\mathcal P'|$. We may assume that both endpoints of every path in $\mathcal P'$ are in $X'$, not $Y'$.

Because $X' \subseteq X$, we have $S' \subseteq X$ as well; moreover, none of the vertices $x(C)$ for $C \in \cycleB$ have common neighbors outside $F$, or else one of the cycles would have been handled by Case 1 of the processing phase instead. Therefore $\deficiency(G,S') = \deficiency(F^*,S')$. 

We are now ready to construct the path $X$-cover $\mathcal P$ in $G$ and a set $S \subseteq X$ with $|\mathcal P| \leq \deficiency(G,S)$. Let
$S = S' \cup \{x(C) : C \in \cycleA\}$.
The vertices $\{x(C) : C \in \cycleA\}$ have no common neighbors with each other or with any vertex in $S'$.  This is ensured by Case 1 and Case 2 of the processing phase, where any cycle $C$ for which $x(C)$ did have such a common neighbor would be placed in~$\cycleB$ or~$\cycleC$ instead. Therefore $\deficiency(G,S) = \deficiency(G, S') + |\cycleA|$. We transform $\mathcal P'$ into $\mathcal P$ with~$|\cycleA|$ additional paths.

For each $C \in \cycleA$, we add a single path to $\mathcal P$ that covers the vertices of $X$ on both $C$ and any cycles in $\cycleC$ that attach to $C$. There are three possibilities:
\begin{itemize}
\item If there are no cycles in $\cycleC$ that attach to $C$, we take a path that goes around $C$.

\item If there is one cycle $C_1 \in \cycleC$ that attaches to $C$, we obtain a path that covers both~$C$ and~$C_1$ by taking both cycles, adding edge $e(C_1)$, and deleting an edge incident on~$e(C_1)$ from both cycles.

\item If there are two cycles $C_1, C_2 \in \cycleC$ that attach to $C$, then $e(C_1)$ and $e(C_2)$ cannot both have endpoints in $\{y^+(C), y^-(C)\}$, because once one such cycle is added to $\cycleC$ in Case 1, we Do Something Else to prevent a second cycle of this type from appearing. (This is also why three cycles in $\cycleC$ cannot attach to $C$.) Without loss of generality, $C_1$ and $C_2$ attach to $C$ at $x(C)$ and $y^+(C)$.

We obtain a path that covers $C$, $C_1$, and $C_2$ by taking all three cycles, adding edges $e(C_1)$ and $e(C_2)$, deleting edge $x(C) y^+(C)$ from $C$, and deleting an edge from each of $C_1$ and $C_2$ incident on~$e(C_1)$ and~$e(C_2)$, respectively.
\end{itemize}

Finally, we must transform the paths in $\mathcal P'$ that use the vertices $x(C)$ or $y^*(C)$ for some $C \in \cycleB$ into paths in $G$ that cover the cycles in $\cycleB$, as well as any cycles in $\cycleC$ that attach to them. 

For each $C \in \cycleB$, there is a path $P_x \in \mathcal P'$ that covers $x(C)$, and possibly a different path $P_y \in \mathcal P'$ that passes through $y^*(C)$. Without changing these paths outside $\{x(C), y^*(C)\}$, we modify them to cover $C$ and any cycles in $\cycleC$ that attach to $C$. During this process, the host graph of the paths in $\mathcal P'$ is unclear, but once we have considered all of $\cycleB$, the paths will all be paths in $G$.

There are multiple possibilities for how the modification is done:
\begin{itemize}
\item $P_x$ contains $x(C)$ but not $y^*(C)$, and $P_y$ does not exist. Then $x(C)$ must be an endpoint of~$P_x$, since $x(C)$ has only one neighbor in $F^*$ other than $y^*(C)$.

We extend $P_x$ to go around the cycle $C$, ending at either $y^+(C)$ or $y^-(C)$. If there is a cycle $C_1 \in \cycleC$ that attaches to $C$ at $y^{\pm}(C)$, we choose $P_x$ to go around $C$ so that it ends at the endpoint of $e(C_1)$; then, extend $P_x$ to use $e(C_1)$ and go around $C_1$. As before, there cannot be two cycles $C_1, C_2 \in \cycleC$ that attach to $C$ at $y^+(C)$ and $y^-(C)$, since we Do Something Else to prevent this.

\item $P_x$ goes from $x(C)$ to $y^*(C)$ and continues to $F$, and $P_y$ does not exist.

The edge used from $y^*(C)$ comes from an edge in $G$ from $y^{\pm}(C)$ to $F$; we modify $P_x$ to go around the cycle $C$ from $x(C)$ to $y^{\pm}(C)$. If there is a cycle $C_1 \in \cycleC$ that attaches to $C$ at~$x(C)$, then $x(C)$ cannot have a neighbor in $F$, so it is an endpoint of $P_x$. We extend~$x(C)$ in the other direction, prepending a path that goes around $C_1$ and takes edge $e(C_1)$ to~$x(C)$. There cannot be a cycle $C_2 \in \cycleC$ that attaches to $C$ at $y^{\mp}(C)$, again because we Do Something Else to prevent it.

\item $P_x$ contains $x(C)$, and a different path $P_y$ contains $y^*(C)$.

In this case, $x(C)$ must be an endpoint of $P_x$. We leave $P_x$ unchanged, unless there is a cycle $C_1 \in \cycleC$ that attaches to $C$ at $x(C)$. In this case, $x(C)$ has no neighbors in $F$, so $P_x$ must be a path of length $0$. We replace $P_x$ by a path that covers $C_1$ and ends with $e(C_1)$, also covering $x(C)$.

Meanwhile, $P_y$ must enter and leave $y^*(C)$ by edges other than $x(C)y^*(C)$; in $G$, these correspond to two edges between $\{y^+(C), y^-(C)\}$ and $F$. We modify $P_y$, replacing $y^*(C)$ by the $y^+(C),y^-(C)$-path that goes around $C$, covering all its vertices except the previously covered $x(C)$. 
\end{itemize}
Once this process is complete, we add these modified paths to $\mathcal P$, along with the paths that covered the cycles in $\cycleA$. The collection $\mathcal P$ is a path $X$-cover; just as $\mathcal P'$ did, it still covers all vertices of $X'$, but now it also covers all vertices of the cycles in $\mathcal C$. This completes the proof, since $|\mathcal P| = |\mathcal P'| + |\cycleA| \le \deficiency(G,S') + |\cycleA| = \deficiency(G,S)$.
\end{proof}

\section{Graphs with high girth}

We will use the Lov\'asz Local Lemma~\cite{erdos1975problems} to prove Theorem~\ref{result:high girth}, in the form stated below.

\begin{lemma}[The Local Lemma; Lemma~5.1.1 in~\cite{alon08}]\label{local-lemma}
Let $A_1, A_2, \dots, A_N$ be events in an arbitrary probability space. A directed graph $D = (V,E)$ on the set of vertices $V = \{1,2,\dots,N\}$ is called a \defstyle{dependency digraph} for the events $A_1, \dots, A_N$ if for each $i$, $1 \le i \le N$, the event $A_i$ is mutually independent of all the events $\{A_j : (i,j) \notin E\}$. Suppose that $D = (V,E)$ is a dependency digraph for the above events and suppose there are real numbers $x_1, \dots, x_N$ such that $0 \le x_i < 1$ and $\Pr[A_i] \le x_i \prod_{(i,j) \in E} (1-x_j)$ for all $1 \le i \le N$. Then \[
	\Pr\left[ \bigwedge_{i=1}^N \overline {A_i}\right] \ge \prod_{i=1}^N (1-x_i).
\]
In particular, with positive probability, no event $A_i$ holds.
\end{lemma}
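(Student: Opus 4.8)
The plan is to prove a stronger conditional statement by induction and then derive the theorem from it by a single telescoping product. Concretely, I would first establish the key claim that for every index $i$ and every subset $S \subseteq \{1,\dots,N\}\setminus\{i\}$,
\[
	\Pr\!\left[A_i \,\middle|\, \bigwedge_{j \in S} \overline{A_j}\right] \le x_i,
\]
proceeding by induction on $|S|$. The base case $S=\varnothing$ is immediate, since $\Pr[A_i] \le x_i \prod_{(i,j)\in E}(1-x_j) \le x_i$ because every factor $1-x_j$ lies in $(0,1]$.

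For the inductive step I would split the conditioning set as $S = S_1 \cup S_2$, where $S_1 = \{j \in S : (i,j) \in E\}$ collects the out-neighbors of $i$ and $S_2 = S \setminus S_1$ collects the rest, and then write the conditional probability as a ratio,
\[
	\Pr\!\left[A_i \,\middle|\, \bigwedge_{j \in S}\overline{A_j}\right]
	= \frac{\Pr\!\left[A_i \wedge \bigwedge_{j \in S_1}\overline{A_j} \,\middle|\, \bigwedge_{k \in S_2}\overline{A_k}\right]}
	       {\Pr\!\left[\bigwedge_{j \in S_1}\overline{A_j} \,\middle|\, \bigwedge_{k \in S_2}\overline{A_k}\right]}.
\]
The numerator is at most $\Pr[A_i \mid \bigwedge_{k \in S_2}\overline{A_k}]$, which equals $\Pr[A_i]$ because every $k \in S_2$ satisfies $(i,k)\notin E$, so $A_i$ is mutually independent of the family $\{A_k : k \in S_2\}$; hence the numerator is at most $x_i \prod_{(i,j)\in E}(1-x_j)$. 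For the denominator, I would assume $S_1 = \{j_1,\dots,j_r\}$ is nonempty (otherwise it equals $1$ and the bound follows at once) and expand it by the chain rule as $\prod_{l=1}^r \Pr[\overline{A_{j_l}} \mid \bigwedge_{m<l}\overline{A_{j_m}} \wedge \bigwedge_{k \in S_2}\overline{A_k}]$. Each conditioning set appearing here is a subset of $S$ that omits $j_l$ and has size strictly less than $|S|$, so the inductive hypothesis gives $\Pr[A_{j_l} \mid \cdots] \le x_{j_l}$, making each factor at least $1-x_{j_l}$ and the denominator at least $\prod_{j \in S_1}(1-x_j)$. Dividing the two bounds and using $S_1 \subseteq \{j : (i,j)\in E\}$ to cancel the neighbor factors leaves exactly $x_i$, completing the induction.

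Finally, I would deduce the theorem by one telescoping application of the chain rule together with the claim:
\[
	\Pr\!\left[\bigwedge_{i=1}^N \overline{A_i}\right]
	= \prod_{i=1}^N \Pr\!\left[\overline{A_i} \,\middle|\, \bigwedge_{j<i}\overline{A_j}\right]
	\ge \prod_{i=1}^N (1-x_i),
\]
where the inequality applies the claim with $S = \{1,\dots,i-1\}$ for each $i$. Since each $x_i < 1$ the product is strictly positive, which gives the ``in particular'' conclusion. I expect the main obstacle to be the bookkeeping in the inductive step: verifying that the independence reduction in the numerator is legitimate (this is the single place where the dependency-digraph hypothesis is actually used, via the fact that $S_2$ consists only of non-neighbors of $i$), and confirming that every conditioning set produced by the chain-rule expansion of the denominator is genuinely smaller than $S$, so that the induction is well-founded.
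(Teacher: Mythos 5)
The paper does not actually prove this lemma---it is quoted verbatim from Alon and Spencer \cite{alon08} (Lemma~5.1.1) and used as a black box---so the only meaningful comparison is with that cited source, and your argument is precisely the standard inductive proof given there: the induction on $|S|$ establishing $\Pr[A_i \mid \bigwedge_{j \in S} \overline{A_j}] \le x_i$ via the numerator/denominator split (with the dependency-digraph hypothesis used exactly once, on the numerator), followed by the telescoping chain rule. Your proof is correct as written; the only point you gloss over (as does the textbook) is that the conditional probabilities are well-defined, i.e.\ that each conditioning event $\bigwedge_{j \in S}\overline{A_j}$ has positive probability, which follows by running the same induction jointly with the bound $\Pr[\bigwedge_{j\in S}\overline{A_j}] \ge \prod_{j \in S}(1-x_j) > 0$.
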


A symmetric version of Lemma~\ref{local-lemma} is often used, where $\Pr[A_i] = p$ for all $i$; by setting $x_i = e \cdot \Pr[A_i]$ for all $i$, and using the inequality $(1 - \frac1{x+1})^x \ge \frac1e$, valid for all $x \ge0$, the hypotheses of the lemma are satisfied. In our case, the probabilities of our events $A_i$ will vary, but we will pursue mostly the same strategy. We will still set $x_i = e \cdot \Pr[A_i]$ for all $i$; because event $A_i$ will depend on two types of other events, we will use the inequality $(1 - \frac1{2x+1})^x \ge e^{-1/2}$ (also valid for all $x \ge 0$) on two parts of the product, instead.

\begin{proof}[Proof of Theorem~\ref{result:high girth}]
Let $G$ be an $(X,Y)$-bigraph with maximum degree at most $d$ and girth $g \ge 4ed^2 + 1$. Additionally, let $\mathcal C$ be a collection of pairwise vertex-disjoint cycles in $G$ that cover $X$. This collection exists either by assumption or (if $G$ is taken to be $d$-regular) by Lemma~\ref{lemma:cycle cover}. The girth condition on $G$ guarantees that the cycles in $\mathcal C$ are relatively long, so there cannot be too many of them; in fact, we will show that there cannot be more than $\alpha_\ntwo(G)$ of them.

If $v \in X$, we will write $C(v)$ for the cycle in $\mathcal C$ containing $v$.  Furthermore, if $v_1, v_2 \in X$, we will write $v_1 \sim v_2$ to mean that $C(v_1) \ne C(v_2)$ and $v_1$ and $v_2$ have a common neighbor in $Y$.

We choose a set $S$ randomly, by selecting one vertex of $X$ uniformly at random from each cycle in $\mathcal C$. Let $\{u_1, v_1\}, \dots, \{u_N, v_N\}$ be an enumeration of the (unordered) pairs of vertices in $X$ such that $u_i \sim v_i$. For each $i$, $1 \le i \le N$, we let $A_i$ be the event that $u_i \in S$ and $v_i \in S$. As a result, the conjunction $\overline{A_1} \land \dots \land \overline{A_n}$ is exactly the claim that $S$ is $\ntwo$-independent. If we can satisfy the hypotheses of Lemma~\ref{local-lemma} for the events $A_1, \dots, A_N$, then this conjunction occurs with positive probability, and therefore $|\mathcal C| = |S| \le \alpha_\ntwo(G)$, proving the theorem.

We define the dependency digraph $D$ to include an edge $(i,j)$ whenever the four vertices~$u_i$,~$v_i$,~$u_j$, and~$v_j$ do \emph{not} lie on four distinct cycles. If $C(u_i)$ has length $2\ell_1$ and $C(v_i)$ has length $2\ell_2$, we define $x_i = \frac{e}{\ell_1 \ell_2}$; for reference, $\Pr[A_i] = \frac1{\ell_1 \ell_2}$.
 
We will show that the hypotheses of Lemma~\ref{local-lemma} hold with the choices made above. To do this, we must put a lower bound on
\[
	x_i \prod_{(i,j) \in E(D)} (1 - x_j)
\]
for an arbitrary event $A_i$.

This product consists of two types of events $A_j$. The first type consists of those $A_j$ for which either~$u_j$ or~$v_j$ lies on $C(u_i)$ (including $u_j=u_i$ or $v_j=u_i$). There are at most $\ell_1 d^2$ events $A_j$ of this type. For each of them, one of $C(u_j), C(v_j)$ is the same as $C(u_i)$ and has length $\ell_1$, and the other has length at least $g$. Therefore $1 - x_j \ge 1 - \frac{2e}{\ell_1 g}$, for an overall product of at least $(1 - \frac{2e}{\ell_1 g})^{\ell_1 d^2}$. 

The second type of events $A_j$ such that $(i,j) \in E(D)$ consists of those $A_j$ for which either $u_j$ or $v_j$ lies on $C(v_i)$. By a similar argument, there are at most $\ell_2 d^2$ events $A_j$ of this type, and for each of them, $1 - x_j \ge 1 - \frac{2e}{\ell_2 g}$, so
\[
	x_i \prod_{(i,j) \in E(D)} (1 - x_j) \ge \frac e{\ell_1 \ell_2} \left(1 - \frac {2e}{\ell_1 g}\right)^{\ell_1 d^2} \left(1 - \frac {2e}{\ell_2 g}\right)^{\ell_2 d^2}.
\]
Recall that $g \ge 4ed^2+1$; a lower bound on $\frac{\ell_1 g}{2e} \geq 2\ell_1 d^2 + \frac{\ell_1}{2e}$ is $2\ell_1 d^2 + 1$. Applying $(1 - \frac1{2x+1})^x \ge e^{-1/2}$, we get
\[
	\left(1 - \frac {2e}{\ell_1 g}\right)^{\ell_1 d^2} \ge \left(1 - \frac1{2\ell_1 d^2+1}\right)^{\ell_1 d^2} \ge e^{-1/2}
\]
and similarly $\left(1 - \frac {2e}{\ell_2 g}\right)^{\ell_2 d^2} \ge e^{-1/2}$. Therefore
\[
	\frac e{\ell_1 \ell_2} \left(1 - \frac {2e}{\ell_1 g}\right)^{\ell_1 d^2} \left(1 - \frac {2e}{\ell_2 g}\right)^{\ell_2 d^2} \ge \frac e{\ell_1 \ell_2} \cdot e^{-1/2} \cdot e^{-1/2} = \frac1{\ell_1 \ell_2} = \Pr[A_i]
\]
and the conditions of Lemma~\ref{local-lemma} are satisfied. 

We conclude that with positive probability, $S$ is $\ntwo$-independent, and therefore $|\mathcal C| = |\mathcal S| \le \alpha_\ntwo(G)$. The theorem follows, since we can find a path $X$-cover of $G$ of size $|\mathcal C|$ by removing a vertex of $Y$ from each cycle in $\mathcal C$.
\end{proof}

\bibliographystyle{plain}

\end{document}